\theoremstyle{plain}
\newtheorem{thm}{Theorem}[section]
\newtheorem{lm}[thm]{Lemma}
\theoremstyle{definition}
\newtheorem{de}[thm]{Definition}
\newtheorem{co}[thm]{Corollary}
\theoremstyle{remark}
\newtheorem{re}{\sc \textbf{Remark}}
\numberwithin{equation}{section}
\newcolumntype{?}{!{\vrule width 1.4pt}}
\def\correspondingauthor{\footnote{Corresponding author.}}
\renewenvironment{abstract}
               {\list{}{\rightmargin\leftmargin}%
                \item[\textbf{\hspace{8.6mm}Abstract ---}]\relax}
               {\endlist}
\DeclareUrlCommand{\url}{%
    \def\UrlLeft##1\UrlRight{\underline{##1}}}
\date{}
\title{New Subclasses of Analytic and Bi-Univalent Functions Endowed with Coefficient Estimate Problems}
\author{Feras Yousef$^{1,}$\correspondingauthor{}\,\,,\, Somaia Alroud$^{2}$,  Mohamed Illafe$^{3}$ \vspace{0.1in}\\ 
\footnotesize{$^{1,2}$Department of Mathematics, The University of Jordan, Amman 11942, Jordan. }  \\
 \footnotesize{$^{3}$School of Engineering, Math, $\&$ Technology, Navajo Technical University, Crownpoint, NM 87313, USA. }  \\
 \footnotesize{e-mail: $^1$fyousef@ju.edu.jo}, \,$^2$somaiaroud41@gmail.com,\, $^3$millafe@navajotech.edu}
\begin{document}
\maketitle
\begin{abstract} Inspired by the recent works of Srivastava et al. \cite{C27}, Frasin and Aouf \cite{C28}, and \c{C}a\u{g}lar et al. \cite{C29}, we introduce and investigate in the present paper two new general subclasses of the class consisting of normalized analytic and bi-univalent functions in the open unit disk  $\mathbb{U}=\{z\in\mathbb{C}:\left\vert z\right\vert <1\}$. For functions belonging to these general subclasses introduced here, we obtain estimates on the Taylor-Maclaurin coefficients $|a_{2}|$ and $|a_{3}|$. Several connections to some of the earlier known results are also pointed out. The results presented in this paper would generalize and improve those in related works of several earlier authors. \\
\end{abstract}

{\bf Keywords:} Analytic functions; Univalent and bi-univalent functions; Taylor-Maclaurin series, Starlike functions; Convex functions; Coefficient bounds.\\

{\bf 2010 Mathematics Subject Classification.}  Primary 30C45; Secondary 30C50.

\section{Introduction and definitions}
Let $\mathcal{A}$ denote the class of all analytic functions $f$ defined in the open unit disk $\mathbb{U}=\{z\in\mathbb{C}:\left\vert z\right\vert <1\}$ and normalized by the conditions $f(0)=0$ and $f^{\prime}(0)=1$. Thus
each $f\in\mathcal{A}$ has a Taylor-Maclaurin series expansion of the form:
\begin{equation} \label{ieq1}
f(z)=z+\sum\limits_{n=2}^{\infty}a_{n}z^{n}, \ \  (z \in\mathbb{U}).
\end{equation}

Further, let $\mathcal{S}$ denote the class of all functions $f \in\mathcal{A}$ which are univalent in $\mathbb{U}$ (for details, see \cite{Duren}; see also some of the recent investigations \cite{C1,C2,C4,C5}).

Two of the important and well-investigated subclasses of the analytic and univalent function class $\mathcal{S}$ are the class
$\mathcal{S}^{\ast}(\alpha)$ of starlike functions of order $\alpha$ in $\mathbb{U}$ and the class $\mathcal{K}(\alpha)$ of convex functions of order $\alpha$ in $\mathbb{U}$. By definition, we have

\begin{equation}
\mathcal{S}^{\ast}(\alpha):=\left\{ f: \ f \in \mathcal{S} \ \ \text{and} \ \ \mbox{Re}\left\{ \frac{zf^{\prime }(z)}{f(z)}\right\} >\alpha,\quad (z\in \mathbb{%
U}; 0\leq \alpha <1) \right\},  \label{d1}
\end{equation}%
and
\begin{equation}
\mathcal{K}(\alpha):=\left\{ f: \ f \in \mathcal{S} \ \ \text{and} \ \ \mbox{Re}\left\{ 1+\frac{zf^{\prime \prime }(z)}{f^{\prime }(z)}\right\} >\alpha,\quad (z\in \mathbb{%
U}; 0\leq \alpha <1) \right\}. \label{d2}
\end{equation}%

It is clear from the definitions (\ref{d1}) and (\ref{d2}) that $\mathcal{K}(\alpha) \subset \mathcal{S}^{\ast}(\alpha)$. Also we have

\begin{equation}
f(z) \in \mathcal{K}(\alpha) \ \ \text{iff} \ \ zf^{\prime}(z) \in \mathcal{S}^{\ast}(\alpha),
\end{equation}%
and
\begin{equation}
f(z) \in \mathcal{S}^{\ast}(\alpha) \ \ \text{iff} \ \ \int_{0}^{z} \frac{f(t)}{t} dt =F(z) \in \mathcal{K}(\alpha).
\end{equation}%

It is well-known that, if $f(z)$ is an univalent analytic function from a domain $\mathbb{D}_{1}$ onto a domain $\mathbb{D}_{2}$, then the inverse function $g(z)$ defined by
\[
g\left(f(z)\right)=z, \ \ (z\in \mathbb{D}_{1}),
\]
is an analytic and univalent mapping from $\mathbb{D}_{2}$ to $\mathbb{D}_{1}$. Moreover, by the familiar Koebe one-quarter theorem (for details, see \cite{Duren}), we know that the image of $\mathbb{U}$ under every function $f\in \mathcal{S}$ contains a disk of radius $\frac{1}{4}$.

 According to this, every function $f\in \mathcal{S}$ has an inverse map $f^{-1}$ that satisfies the following conditions:
\begin{center}
$f^{-1}(f(z))=z \ \ \ (z\in \mathbb{U}),$
\end{center}
and
\begin{center}
$f\left(f^{-1}(w)\right)=w$ $\ \ \ \left( |w|<r_{0}(f);r_{0}(f)\geq\frac{1}{4}\right)$.
\end{center}

In fact, the inverse function is given by
\begin{equation} \label{ieq2}
f^{-1}(w)=w-a_{2}w^{2}+(2a_{2}^{2}-a_{3})w^{3}-(5a_{2}^{3}-5a_{2}a_{3}+a_{4})w^{4}+\cdots.
\end{equation}

A function $f\in \mathcal{A}$ is said to be bi-univalent in $\mathbb{U}$ if both $f(z)$ and $f^{-1}(z)$ are univalent in $\mathbb{U}$. Let $\Sigma$ denote the class of bi-univalent functions in $\mathbb{U}$ given by (\ref{ieq1}). Examples of functions in the class $\Sigma $ are
\[
\frac{z}{1-z}, \ -\log(1-z), \ \frac{1}{2}\log\left(\frac{1+z}{1-z}\right), \cdots.
\]

It is worth noting that the familiar Koebe function is not a member of $\Sigma $, since it maps the unit disk $\mathbb{U}$ univalently onto the entire complex plane except the part of the negative real axis from $-1/4$ to $-\infty$. Thus, clearly, the image of the domain does not contain the unit disk $\mathbb{U}$. For a brief history and some intriguing examples of functions and characterization of the class $\Sigma$, see Srivastava et al. \cite{C27}, Frasin and Aouf \cite{C28}, and Yousef et al. \cite{C3}.

In 1967, Lewin \cite{C21} investigated the bi-univalent function class $\Sigma $ and showed that $|a_{2}|<1.51$. Subsequently, Brannan and Clunie \cite{C22} conjectured that  $|a_{2}|\leq \sqrt{2}.$ On the other hand, Netanyahu \cite{C23} showed that $\underset{f\in \Sigma }{\max }$ $|a_{2}|=\frac{4}{3}.$ The best known estimate for functions in $\Sigma $ has been obtained in 1984 by Tan \cite{C24}, that is, $|a_{2}|<1.485$. The coefficient estimate problem for each of the following Taylor-Maclaurin coefficients $|a_{n}|$ $(n\in \mathbb{N}\backslash \{1,2\})$ for each $f\in \Sigma$  given by (\ref{ieq1}) is presumably still an open problem.

\newpage

Brannan and Taha \cite{C25} introduced certain subclasses of a bi-univalent function class $\Sigma$ similar to the familiar subclasses $\mathcal{S}^{\ast}(\alpha)$ and $\mathcal{K}(\alpha)$ of starlike and convex functions of order $\alpha$ ($0\leq \alpha <1$), respectively (see \cite{C26}). Thus, following the works of Brannan and Taha \cite{C25}, for $0\leq \alpha <1,$ a function $f\in \Sigma $ is in the class $\mathcal{S}_{\Sigma}^{\ast }\left( \alpha \right) $ of bi-starlike functions of order $\alpha$;
or $\mathcal{K}_{\Sigma }\left( \alpha \right)$ of bi-convex functions of order $\alpha$ if both $f$ and $f^{-1}$ are respectively starlike or convex
functions of order $\alpha.$ Also, a function $f\in \mathcal{A}$ is in the class $\mathcal{S}_{\Sigma }^{\ast }[\alpha]$ of strongly bi-starlike functions of order $\alpha \left( 0\leq \alpha <1 \right)$ if each of the following conditions is satisfied:
\[
f\in \Sigma \ \ \text{and} \ \ \left\vert \arg \left( \frac{zf^{\prime }(z)}{%
f(z)}\right) \right\vert <\frac{\alpha \pi }{2} \quad  \left( 0\leq \alpha <
1,z\in \mathbb{U}\right)
\]
and
\[
\left\vert \arg \left( \frac{wg^{\prime }(w)}{g(w)}\right)
\right\vert <\frac{\alpha \pi }{2} \quad  \left( 0\leq \alpha <1,w\in \mathbb{U}\right),
\]
where $g$ is the extension of $f^{-1}$ to $\mathbb{U}$.

Recently, many researchers have introduced and investigated several interesting subclasses of the bi-univalent function class $\Sigma$ and they have found non-sharp estimates on the first two Taylor-Maclaurin coefficients $|a_{2}|$ and $|a_{3}|$.  In fact, the aforecited work of
Srivastava et al. \cite{C27} essentially revived the investigation of various subclasses of the bi-univalent function class $\Sigma$ in recent years; it was followed by such works as those by Frasin and Aouf \cite{C28}, Xu et al. \cite{C210}, \c{C}a\u{g}lar et al. \cite{C29}, and others (see, for example, \cite{C211,C212} and \cite{C213}). Motivated by the aforementioned works, the main object of the present investigation is to introduce two new subclasses of the function class $\Sigma$ and find estimates on the coefficients $|a_{2}|$ and $|a_{3}|$ for functions in these new subclasses of the function class $\Sigma$ employing the techniques used earlier by Srivastava et al. \cite{C27}. We also extend and improve the aforementioned results of Srivastava et al. \cite{C27}, Frasin and Aouf \cite{C28}, and \c{C}a\u{g}lar et al. \cite{C29}. Various known or new special cases of our results are also pointed out.

The following lemma will be required in order to derive our main results.

\begin{lm} \label{lem} \cite{C214}
If $p\in \mathcal{P}$, then $|c_{k}|\leq 2$ for each k , where $\mathcal{P}$ is the family of
all functions $p$ analytic in $\mathbb{U}$ for which $\mbox{Re} \left(p(z)\right)>0, p(z)=1+c_{1}z+c_{2}z^{2}+\cdots$ for $z\in \mathbb{U}.$
\end{lm}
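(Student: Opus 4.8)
The plan is to obtain this classical estimate (due to Carath\'eodory) directly from the positivity hypothesis, bypassing any structure theorem. Fix $p\in\mathcal{P}$, write $p(z)=1+\sum_{n\geq1}c_{n}z^{n}$, and fix a radius $r\in(0,1)$. Since $p$ is analytic on a neighbourhood of the closed disk $|z|\leq r$, its Taylor series converges uniformly on the circle $|z|=r$, so integrating term by term against $e^{-ik\theta}$ gives, for every integer $k\geq1$,
\[
c_{k}r^{k}=\frac{1}{2\pi}\int_{0}^{2\pi}p\!\left(re^{i\theta}\right)e^{-ik\theta}\,d\theta,
\qquad
0=\frac{1}{2\pi}\int_{0}^{2\pi}\overline{p\!\left(re^{i\theta}\right)}\,e^{-ik\theta}\,d\theta ,
\]
the second identity holding because $\overline{p(re^{i\theta})}=\sum_{n\geq0}\overline{c_{n}}r^{n}e^{-in\theta}$ involves only the exponentials $e^{-in\theta}$ with $n\geq0$, none of which equals $e^{ik\theta}$.

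Next I would add the two identities, so that the integrand collapses to $p+\overline{p}=2\,\mbox{Re}\,p$:
\[
c_{k}r^{k}=\frac{1}{2\pi}\int_{0}^{2\pi}2\,\mbox{Re}\,p\!\left(re^{i\theta}\right)e^{-ik\theta}\,d\theta\qquad(k\geq1).
\]
Passing to absolute values, using $\left|e^{-ik\theta}\right|=1$ together with $\mbox{Re}\,p>0$, and invoking the mean value property of the harmonic function $\mbox{Re}\,p$, I obtain
\[
|c_{k}|\,r^{k}\leq\frac{1}{2\pi}\int_{0}^{2\pi}2\,\mbox{Re}\,p\!\left(re^{i\theta}\right)d\theta=2\,\mbox{Re}\,p(0)=2\,\mbox{Re}\,c_{0}=2 .
\]
Letting $r\to1^{-}$, and noting that the $c_{k}$ do not depend on $r$, yields $|c_{k}|\leq2$, as claimed.

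There is essentially no obstacle here; the only step to be careful about is the term-by-term integration, which is legitimate since all computations take place on a circle of radius $r<1$ where the series converges uniformly, and the final passage $r\to1^{-}$ is free. A more conceptual alternative would be to invoke the Herglotz representation $p(z)=\int_{\partial\mathbb{U}}\frac{1+\bar\zeta z}{1-\bar\zeta z}\,d\mu(\zeta)$ with $\mu$ a probability measure on $\partial\mathbb{U}$, expand the kernel as $1+2\sum_{k\geq1}\bar\zeta^{\,k}z^{k}$ to read off $c_{k}=2\int_{\partial\mathbb{U}}\bar\zeta^{\,k}\,d\mu(\zeta)$, and conclude $|c_{k}|\leq2\mu(\partial\mathbb{U})=2$; this also shows equality for some $k$ forces $p(z)=(1+\eta z)/(1-\eta z)$ with $|\eta|=1$, though such sharpness is not needed in the sequel. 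I would avoid the tempting route through the Schwarz function $w=(p-1)/(p+1)$, since a naive Cauchy estimate on $w$ only gives $|c_{k}|\leq4$ for $k\geq2$.
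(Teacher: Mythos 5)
Your argument is correct and complete: the Fourier-coefficient identity $c_{k}r^{k}=\frac{1}{2\pi}\int_{0}^{2\pi}2\,\mathrm{Re}\,p(re^{i\theta})e^{-ik\theta}\,d\theta$, the positivity of $\mathrm{Re}\,p$, the mean value property giving $\frac{1}{2\pi}\int_{0}^{2\pi}\mathrm{Re}\,p(re^{i\theta})\,d\theta=\mathrm{Re}\,p(0)=1$, and the limit $r\to1^{-}$ together yield $|c_{k}|\leq2$ with no gaps; the auxiliary identity $\int_{0}^{2\pi}\overline{p(re^{i\theta})}e^{-ik\theta}\,d\theta=0$ for $k\geq1$ is justified exactly as you say. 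The paper itself does not prove this lemma at all --- it is the classical Carath\'eodory coefficient estimate, quoted from Pommerenke's book --- so there is no internal argument to compare against; what you have written is precisely the standard proof found in that reference (your Herglotz-representation alternative is the other standard route and also gives the sharpness statement, which the paper never needs).
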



\section{Coefficient bounds for the function class $\mathscr{B}_{\Sigma }^{\mu }(\alpha,\lambda,\delta)$}
We begin this section by introducing the following subclass of the function class $\Sigma$.

\begin{de} \label{def21}
For $\lambda \geq 1,\mu \geq 0, \delta \geq 0$ and $0 < \alpha \leq 1$, a function $f\in \Sigma $ given by (\ref{ieq1}) is said to be in the class $\mathscr{B}_{\Sigma }^{\mu }(\alpha, \lambda ,\delta)$ if the following conditions hold for all $z,w\in \mathbb{U}$:
\begin{equation} \label{ieq21}
\left|\arg\left((1-\lambda )\left(\frac{f(z)}{z}\right)^{\mu }+\lambda f^{\prime }(z)\left(\frac{f(z)}{z}\right)^{\mu -1}+\xi\delta zf^{\prime \prime }(z)\right)\right|<\frac{\alpha \pi }{2}
\end{equation}
and
\begin{equation} \label{ieq22}
\left|\arg\left( (1-\lambda )\left(\frac{g(w)}{w}\right)^{\mu }+\lambda g^{\prime}(w)\left(\frac{g(w)}{w}\right)^{\mu -1}+\xi\delta
wg^{\prime \prime }(w)\right)\right|<\frac{\alpha \pi }{2},
\end{equation}
where the function $g(w)=f^{-1}(w)$ is defined by (\ref{ieq2}) and $\xi=\frac{2\lambda +\mu }{2\lambda +1}$.
\end{de}
\newpage
\begin{re}
Note that for $\lambda=1, \mu=1$ and $\delta =0$, the class of functions $\mathscr{B}_{\Sigma }^{1}(\alpha,1,0):=\mathscr{B}_{\Sigma }(\alpha)$ have been introduced and studied by Srivastava et al. \cite{C27}, for $\mu=1$ and $\delta =0$, the class of functions $\mathscr{B}_{\Sigma }^{1}(\alpha,\lambda,0):=\mathscr{B}_{\Sigma }(\alpha,\lambda)$ have been introduced and studied by Frasin and Aouf \cite{C28}, for $\delta =0$, the class of functions $\mathscr{B}_{\Sigma }^{\mu}(\alpha,\lambda,0):=\mathscr{B}_{\Sigma }^{\mu}(\alpha,\lambda)$ have been introduced and studied by \c{C}a\u{g}lar et al. \cite{C29}, and for $\lambda=1, \mu=0$ and $\delta =0$, we obtain the well-known class $\mathscr{B}_{\Sigma }^{0}(\alpha,1,0):=\mathcal{S}^*_\Sigma[\alpha]$ of strongly bi-starlike functions of order $\alpha$.
\end{re}

We first state and prove the following result.

\begin{thm}
\label{thm21} Let the function $f(z)$  given by (\ref{ieq1}) be in the class $\mathscr{B}_{\Sigma }^{\mu }(\alpha,\lambda,\delta)$. Then
\begin{equation} \label{theq21}
|a_{2}|\leq \frac{2\alpha }{\sqrt{\left( \lambda +\mu +2\xi \delta \right)^{2}+\alpha \left[2\lambda +\mu -\left(\lambda +2\xi \delta \right)^{2}+\left( 12-4\mu\right) \xi \delta \right]}}
\end{equation}
and
\begin{equation} \label{theq222}
 \hspace{-1.85in}|a_{3}|\leq \frac{4\alpha ^{2}}{(\lambda+\mu +2\xi\delta )^{2}}+\frac{2\alpha }{2\lambda +\mu+6\xi\delta}.
\end{equation}

\end{thm}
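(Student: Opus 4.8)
The plan is to follow the standard Srivastava–Frasin–Aouf machinery for bi-univalent coefficient bounds, adapted to the weighted operator appearing in Definition~\ref{def21}. First I would introduce the analytic functions
\[
\Phi(z):=(1-\lambda)\left(\frac{f(z)}{z}\right)^{\mu}+\lambda f'(z)\left(\frac{f(z)}{z}\right)^{\mu-1}+\xi\delta z f''(z),
\qquad
\Psi(w):=(1-\lambda)\left(\frac{g(w)}{w}\right)^{\mu}+\lambda g'(w)\left(\frac{g(w)}{w}\right)^{\mu-1}+\xi\delta w g''(w),
\]
where $g=f^{-1}$. Since $f\in\mathscr{B}_{\Sigma}^{\mu}(\alpha,\lambda,\delta)$, both $\Phi$ and $\Psi$ have argument bounded by $\alpha\pi/2$, so there exist $p,q\in\mathcal{P}$ with $\Phi(z)=[p(z)]^{\alpha}$ and $\Psi(w)=[q(w)]^{\alpha}$, where $p(z)=1+c_1z+c_2z^2+\cdots$ and $q(w)=1+d_1w+d_2w^2+\cdots$. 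The key computational step is to expand $\Phi(z)$ and $\Psi(w)$ as power series in terms of $a_2,a_3$ (using \eqref{ieq2} for the coefficients of $g$), and simultaneously expand $[p(z)]^{\alpha}=1+\alpha c_1 z+\bigl(\alpha c_2+\tfrac{\alpha(\alpha-1)}{2}c_1^2\bigr)z^2+\cdots$ and likewise for $[q(w)]^{\alpha}$.

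Next I would equate coefficients. Matching the $z^1$ and $w^1$ terms gives two expressions of the form $(\lambda+\mu+2\xi\delta)\,a_2=\alpha c_1$ and $-(\lambda+\mu+2\xi\delta)\,a_2=\alpha d_1$; in particular $c_1=-d_1$. Matching the $z^2$ and $w^2$ terms produces two more relations involving $a_2^2$, $a_3$ on the left and $\alpha c_2+\tfrac{\alpha(\alpha-1)}{2}c_1^2$, $\alpha d_2+\tfrac{\alpha(\alpha-1)}{2}d_1^2$ on the right. Adding the two second-order relations eliminates the pure $a_3$ term (since the $a_3$-coefficients are opposite after using $g$'s expansion), leaving an equation of the shape
\[
\Bigl[\text{(coefficient)}\Bigr]a_2^2=\alpha(c_2+d_2)+\tfrac{\alpha(\alpha-1)}{2}(c_1^2+d_1^2).
\]
Then I substitute $c_1^2+d_1^2=2c_1^2=\dfrac{2(\lambda+\mu+2\xi\delta)^2}{\alpha^2}a_2^2$ from the first-order relations, collect the $a_2^2$ terms on one side, and obtain $a_2^2$ in terms of $c_2+d_2$ alone; applying $|c_2|,|d_2|\le2$ from Lemma~\ref{lem} yields the bound \eqref{theq21}. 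For $|a_3|$, I subtract the two second-order relations to get $a_3-a_2^2$ proportional to $c_2-d_2$ (the $c_1^2$, $d_1^2$ terms cancel since $c_1^2=d_1^2$), so $a_3=a_2^2+\text{(something)}(c_2-d_2)$; using the already-derived expression $a_2^2=\dfrac{\alpha^2(c_1^2? )}{\cdots}$ — more simply, bounding $|a_2^2|\le\dfrac{4\alpha^2}{(\lambda+\mu+2\xi\delta)^2}$ via the first-order relation and $|c_2-d_2|\le4$ — delivers \eqref{theq222}.

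The main obstacle I anticipate is purely algebraic bookkeeping: the factors $\left(\frac{f(z)}{z}\right)^{\mu}$ and $\left(\frac{f(z)}{z}\right)^{\mu-1}$ must be expanded to second order as $1+\mu a_2 z+\bigl(\mu a_3+\tfrac{\mu(\mu-1)}{2}a_2^2\bigr)z^2+\cdots$ and the analogous $\mu-1$ version, then multiplied against $\lambda f'(z)$, combined with $(1-\lambda)(f(z)/z)^\mu$ and $\xi\delta z f''(z)$, and the same done for $g$ using the inversion formula \eqref{ieq2}; getting the $a_2^2$-coefficient in the summed second-order equation exactly equal to $(\lambda+\mu+2\xi\delta)^2+\alpha\bigl[2\lambda+\mu-(\lambda+2\xi\delta)^2+(12-4\mu)\xi\delta\bigr]\big/\alpha$ after the $c_1^2+d_1^2$ substitution is where all the care is needed. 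The specific choice $\xi=\frac{2\lambda+\mu}{2\lambda+1}$ is presumably designed to make these coefficients simplify; I would verify the $\delta=0$ specializations against Srivastava et al.\ \cite{C27}, Frasin–Aouf \cite{C28}, and \c{C}a\u{g}lar et al.\ \cite{C29} as a consistency check along the way.
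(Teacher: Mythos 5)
Your proposal follows exactly the route of the paper's own proof: writing the two conditions as $[p(z)]^{\alpha}$ and $[q(w)]^{\alpha}$, equating first- and second-order coefficients, adding the second-order relations and substituting the first-order ones to bound $|a_2|$ via Lemma \ref{lem}, then subtracting to get $a_3=a_2^2+\frac{\alpha}{2(2\lambda+\mu+6\xi\delta)}(p_2-q_2)$ and bounding $|a_2|^2$ from the first-order relations. The algebraic bookkeeping you defer is precisely what the paper's equations (\ref{eq23})--(\ref{eq26}) record, so your plan is correct and essentially identical to the published argument.
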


\begin{proof}
Let $f\in \mathscr{B}_{\Sigma }^{\mu }(\alpha,\lambda,\delta)$. From (\ref{ieq21}) and (\ref{ieq22}), we have
\begin{equation} \label{eq21}
(1-\lambda )\left(\frac{f(z)}{z}\right)^{\mu }+\lambda f^{\prime }(z)\left(\frac{f(z)}{z}\right)^{\mu -1}+\xi\delta zf^{\prime \prime }(z)=[p(z)]^{\alpha }
\end{equation}
and
\begin{equation} \label{eq22}
(1-\lambda )\left(\frac{g(w)}{w}\right)^{\mu }+\lambda g^{\prime}(w)\left(\frac{g(w)}{w}\right)^{\mu -1}+\xi\delta
wg^{\prime \prime }(w)=[q(w)]^{\alpha },
\end{equation}
where $p(z)= 1+p_{1}z+p_{2}z^{2}+\cdots$ and $q(w)=1+q_{1}w+q_{2}w^{2}+\cdots$ in $\mathcal{P}$.

Now, equating the coefficients in (\ref{eq21}) and (\ref{eq22}), we get
\begin{equation} \label{eq23}
\hspace{0.55in} \left(\lambda+\mu + 2\xi\delta \right)a_{2}=\alpha p_{1},
\end{equation}
\begin{equation} \label{eq24}
(2\lambda+\mu)\left[\left(\frac{\mu -1}{2}\right)a_{2}^{2}+\left(1+\frac{6\delta }{2\lambda +1}\right)a_{3}\right]=\alpha p_{2}+\frac{\alpha (\alpha -1)}{2}p_{1}^{2},
\end{equation}
and
\begin{equation} \label{eq25}
\hspace{0.5in} -\left(\lambda+\mu +2\xi\delta \right)a_{2}=\alpha q_{1},
\end{equation}
\begin{equation} \label{eq26}
\hspace{-.5in}(2\lambda+\mu)\left[\left(\frac{\mu +3}{2}+\frac{12\delta }{2\lambda +1}\right)a_{2}^{2}-\left(1+\frac{6\delta }{2\lambda +1}\right)a_{3}\right]=\alpha q_{2}+\frac{\alpha(\alpha -1)}{2}q_{1}^{2}.
\end{equation}

From (\ref{eq23}) and (\ref{eq25}), we obtain
\begin{equation} \label{eq27}
\hspace{0.15in} p_{1}=-q_{1},
\end{equation}
and
\begin{equation} \label{eq28}
\hspace{-0.5in} 2\left(\lambda +\mu +2\xi\delta\right)^{2} a_{2}^2=\alpha ^{2}(p_{1}^{2}+q_{1}^{2}).
\end{equation}

By adding (\ref{eq24}) to (\ref{eq26}), we get
\begin{equation} \label{eq29}
(2\lambda +\mu)\left[1+\mu +\frac{12\delta }{2\lambda +1}\right] a_{2}^{2}=\alpha (p_{2}+q_{2})+\frac{\alpha (\alpha -1)}{2}(p_{1}^{2}+q_{1}^{2}).
\end{equation}

By using (\ref{eq28}) in (\ref{eq29}), we obtain
\begin{equation} \label{eq210}
\left[ (2\lambda+\mu)(\mu +1)+12\xi\delta-\frac{(\alpha -1)}{\alpha }(\lambda+\mu +2\xi\delta)^{2}\right] a_{2}^{2}=\alpha\left(p_{2}+q_{2}\right).
\end{equation}

By considering Lemma \ref{lem} we get from (\ref{eq210}) the desired inequality (\ref{theq21}).

Next, by subtracting (\ref{eq26}) from (\ref{eq24}), we have
\begin{equation} \label{eq211}
2(2\lambda +\mu)\left(1+\frac{6\delta }{2\lambda +1}\right)a_{3}-2(2\lambda +\mu)\left(1+\frac{6\delta }{2\lambda +1}\right)a_{2}^{2}=\alpha\left(p_{2}-q_{2}\right) +\frac{\alpha(\alpha-1)}{2}\left(p_{1}^{2}-q_{1}^{2}\right).
\end{equation}

Further, in view of (\ref{eq27}), it follows from (\ref{eq211}) that
\begin{equation} \label{eq212}
a_{3}=a_{2}^{2}+\frac{\alpha}{2(2\lambda +\mu+6\xi\delta)}\left(p_{2}-q_{2}\right).
\end{equation}

By considering (\ref{eq28}) and Lemma \ref{lem}, we get from (\ref{eq212}) the desired inequality (\ref{theq222}).

This complete the proof of Theorem \ref{thm21}.
\end{proof}

Now, we would like to draw attention to some remarkable results which are obtained for some values of $\lambda, \mu$ and $\delta$ in Theorem \ref{thm21}.

If we choose $\lambda =1$, $\mu =1$ and $\delta =0$ in Theorem \ref{thm21}, we get the following consequence.

\begin{co} \cite{C27}
Let the function $f(z)$ given by (\ref{ieq1}) be in the class $\mathscr{B}_{\Sigma }(\alpha)$. Then
\begin{equation*}
\hspace{-.08in} |a_{2}|\leq \alpha \sqrt{\frac{2}{\alpha +2}}
\end{equation*}
and
\begin{equation*}
|a_{3}|\leq \frac{\alpha (3\alpha +2)}{3}.
\end{equation*}
\end{co}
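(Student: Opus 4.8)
The plan is simply to specialize Theorem~\ref{thm21}. First I would invoke the Remark immediately preceding the theorem, which records that $\mathscr{B}_{\Sigma}(\alpha)=\mathscr{B}_{\Sigma}^{1}(\alpha,1,0)$; hence every $f\in\mathscr{B}_{\Sigma}(\alpha)$ satisfies the hypotheses of Theorem~\ref{thm21} with the parameter choice $\lambda=1$, $\mu=1$, $\delta=0$. Since $\delta=0$, every term carrying the factor $\xi\delta$ vanishes, so the auxiliary quantity $\xi=\frac{2\lambda+\mu}{2\lambda+1}$ never actually enters the estimates and no separate computation of $\xi$ is needed.

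Next I would substitute $\lambda=\mu=1$, $\delta=0$ into the bound (\ref{theq21}) for $|a_{2}|$. The base quantity becomes $\lambda+\mu+2\xi\delta=2$, so its square is $4$, while the bracketed expression $2\lambda+\mu-(\lambda+2\xi\delta)^{2}+(12-4\mu)\xi\delta$ collapses to $2+1-1=2$. Therefore the radicand equals $4+2\alpha$, and
\[
|a_{2}|\leq\frac{2\alpha}{\sqrt{4+2\alpha}}=\frac{2\alpha}{\sqrt{2}\,\sqrt{\alpha+2}}=\alpha\sqrt{\frac{2}{\alpha+2}},
\]
which is the asserted estimate.

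I would treat (\ref{theq222}) in the same way. With the same substitutions the first summand becomes $\frac{4\alpha^{2}}{(\lambda+\mu+2\xi\delta)^{2}}=\frac{4\alpha^{2}}{4}=\alpha^{2}$ and the second becomes $\frac{2\alpha}{2\lambda+\mu+6\xi\delta}=\frac{2\alpha}{3}$; adding these over the common denominator $3$ yields $\frac{3\alpha^{2}+2\alpha}{3}=\frac{\alpha(3\alpha+2)}{3}$, exactly as claimed.

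There is essentially no obstacle here: the entire analytic content lives in Theorem~\ref{thm21}, whose proof is already complete, and the corollary is pure bookkeeping. The only point that demands a little care is the sign of the $(\lambda+2\xi\delta)^{2}$ term inside the bracket of (\ref{theq21}), which one must keep track of when collapsing it to $2$; once that is done the two displayed inequalities coincide verbatim with the corresponding bounds of Srivastava et al.~\cite{C27}, confirming that the corollary indeed recovers their theorem.
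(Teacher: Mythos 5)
Your proposal is correct and matches the paper's treatment exactly: the corollary is obtained by setting $\lambda=1$, $\mu=1$, $\delta=0$ in Theorem~\ref{thm21}, and your substitutions into (\ref{theq21}) and (\ref{theq222}) (radicand $4+2\alpha$, and $\alpha^{2}+\tfrac{2\alpha}{3}=\tfrac{\alpha(3\alpha+2)}{3}$) are accurate.
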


If we choose $\mu =1$ and $\delta =0$ in Theorem \ref{thm21}, we get the following consequence.

\begin{co} \cite{C28}
Let the function $f(z)$  given by (\ref{ieq1}) be in the class $\mathscr{B}_{\Sigma }(\alpha,\lambda)$. Then
\begin{equation*}
|a_{2}|\leq \frac{2\alpha }{\sqrt{(\lambda +1)^{2}+\alpha(1+2\lambda -\lambda ^{2})}}
\end{equation*}
and
\begin{equation*}
\hspace{-.55in} |a_{3}|\leq \frac{4\alpha ^{2}}{(\lambda+1)^{2}}+\frac{2\alpha }{(2\lambda+1)}.
\end{equation*}
\end{co}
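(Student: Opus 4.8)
\noindent The plan is to obtain this corollary as an immediate specialization of Theorem~\ref{thm21}, not by re-running the coefficient machinery. First I would invoke the Remark following Definition~\ref{def21}, which identifies $\mathscr{B}_{\Sigma}(\alpha,\lambda)$ with $\mathscr{B}_{\Sigma}^{1}(\alpha,\lambda,0)$; thus any $f$ satisfying the hypothesis already lies in the class treated by Theorem~\ref{thm21} with $\mu=1$ and $\delta=0$, and it remains only to evaluate the two estimates (\ref{theq21}) and (\ref{theq222}) at those parameter values.

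The key (and essentially only nontrivial) point is that the auxiliary constant $\xi=\frac{2\lambda+\mu}{2\lambda+1}$ must be evaluated at $\mu=1$ first; there it collapses to $\xi=1$, after which every remaining $\xi\delta$-term vanishes because $\delta=0$. Tracking this through (\ref{theq21}): one has $\lambda+\mu+2\xi\delta=\lambda+1$, $(\lambda+2\xi\delta)^{2}=\lambda^{2}$, $(12-4\mu)\xi\delta=0$, and $2\lambda+\mu=2\lambda+1$, so the radicand becomes $(\lambda+1)^{2}+\alpha(2\lambda+1-\lambda^{2})=(\lambda+1)^{2}+\alpha(1+2\lambda-\lambda^{2})$, which is exactly the asserted bound for $|a_{2}|$. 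Similarly in (\ref{theq222}) one gets $(\lambda+\mu+2\xi\delta)^{2}=(\lambda+1)^{2}$ and $2\lambda+\mu+6\xi\delta=2\lambda+1$, giving $|a_{3}|\le \frac{4\alpha^{2}}{(\lambda+1)^{2}}+\frac{2\alpha}{2\lambda+1}$.

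I expect no genuine obstacle here: the single step demanding care is the $\mu$-dependence of $\xi$, so the substitution $\mu=1$ into $\xi$ cannot be postponed until after the $\delta\to 0$ reduction. As a sanity check I would also re-derive the result directly in this case — for $\mu=1,\delta=0$ the defining expression in (\ref{ieq21}) reduces to $(1-\lambda)\frac{f(z)}{z}+\lambda f^{\prime}(z)$ (and correspondingly in (\ref{ieq22})), so equating Taylor coefficients exactly as in the proof of Theorem~\ref{thm21} and applying Lemma~\ref{lem} should reproduce the same two bounds, which are precisely the estimates of Frasin and Aouf \cite{C28}.
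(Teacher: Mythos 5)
Your proposal is correct and coincides with the paper's own route: the corollary is obtained exactly by setting $\mu=1$ and $\delta=0$ in Theorem~\ref{thm21}, and your evaluation of the bounds is accurate. (The one caution you flag is moot: since $\delta=0$, every $\xi\delta$-term vanishes regardless of when $\mu=1$ is substituted into $\xi$.)
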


If we choose $\delta =0$ in Theorem \ref{thm21}, we get the following consequence.

\begin{co} \cite{C29}
Let the function $f(z)$  given by (\ref{ieq1}) be in the class $\mathscr{B}_{\Sigma }^{\mu }(\alpha,\lambda)$. Then
\begin{equation*}
|a_{2}|\leq \frac{2\alpha }{\sqrt{(\lambda+\mu)^{2}+\alpha(2\lambda+\mu -\lambda ^{2})}}
\end{equation*}
and
\begin{equation*}
\hspace{-.55in} |a_{3}|\leq \frac{4\alpha ^{2}}{(\lambda+\mu)^{2}}+\frac{2\alpha }{(2\lambda+\mu)}.
\end{equation*}
\end{co}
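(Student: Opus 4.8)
The plan is to obtain this corollary at once as the specialization $\delta=0$ of Theorem~\ref{thm21}, so that no new estimation is needed. First I would observe that in Definition~\ref{def21} the auxiliary constant $\xi=\frac{2\lambda+\mu}{2\lambda+1}$ enters the defining conditions (\ref{ieq21}) and (\ref{ieq22}) only through the product $\xi\delta$ multiplying $zf^{\prime\prime}(z)$ and $wg^{\prime\prime}(w)$; hence setting $\delta=0$ annihilates every occurrence of $\xi\delta$, and the membership requirement for $\mathscr{B}_{\Sigma}^{\mu}(\alpha,\lambda,0)$ reduces exactly to the two inequalities defining the class $\mathscr{B}_{\Sigma}^{\mu}(\alpha,\lambda)$ of \c{C}a\u{g}lar et al. \cite{C29}, in agreement with the Remark following Definition~\ref{def21}. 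It therefore suffices to put $\delta=0$ in the conclusions (\ref{theq21}) and (\ref{theq222}) of Theorem~\ref{thm21}.

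Next I would carry out this substitution. In (\ref{theq21}) the radicand $(\lambda+\mu+2\xi\delta)^{2}+\alpha\bigl[2\lambda+\mu-(\lambda+2\xi\delta)^{2}+(12-4\mu)\xi\delta\bigr]$ loses the terms containing $\xi\delta$ and $(\lambda+2\xi\delta)^{2}$ collapses to $\lambda^{2}$, so it becomes $(\lambda+\mu)^{2}+\alpha(2\lambda+\mu-\lambda^{2})$, giving $|a_{2}|\le 2\alpha/\sqrt{(\lambda+\mu)^{2}+\alpha(2\lambda+\mu-\lambda^{2})}$. Likewise, in (\ref{theq222}) the summands $2\xi\delta$ and $6\xi\delta$ vanish, leaving $|a_{3}|\le 4\alpha^{2}/(\lambda+\mu)^{2}+2\alpha/(2\lambda+\mu)$. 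These are precisely the two claimed bounds.

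I do not expect any genuine obstacle: the only points deserving a moment's care are the verification that the $\delta=0$ instance of Definition~\ref{def21} coincides with the \c{C}a\u{g}lar et al. class and that the two bounds simplify faithfully, together with the trivial remark that the right-hand sides are well defined --- indeed, since $\lambda\ge 1$, $\mu\ge 0$ and $0<\alpha\le 1$ we have $2\lambda+\mu\ge 2>0$, $(\lambda+\mu)^{2}>0$, and $(\lambda+\mu)^{2}+\alpha(2\lambda+\mu-\lambda^{2})=(1-\alpha)\lambda^{2}+\mu^{2}+2\lambda\mu+2\alpha\lambda+\alpha\mu>0$, so nothing degenerates. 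If a self-contained argument were preferred, one could instead rerun the proof of Theorem~\ref{thm21} with the term $\xi\delta\,zf^{\prime\prime}(z)$ and its $g$-analogue removed from (\ref{eq21})--(\ref{eq22}); this just reproduces the chain (\ref{eq23})--(\ref{eq212}) with all $\xi\delta$ set to $0$ and then invokes Lemma~\ref{lem} exactly as before, but the specialization route is clearly the economical one.
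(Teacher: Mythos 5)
Your proposal is correct and coincides with the paper's route: the corollary is obtained precisely by setting $\delta=0$ in Theorem \ref{thm21}, whereupon the bounds (\ref{theq21}) and (\ref{theq222}) simplify to the stated inequalities. Your added checks (that $\delta=0$ reduces Definition \ref{def21} to the class of \c{C}a\u{g}lar et al.\ and that the radicand stays positive) are fine but not needed beyond what the paper does.
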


If we choose $\lambda =1, \mu=0$ and $\delta =0$ in Theorem \ref{thm21}, we get the following consequence.

\begin{co} \cite{C29}
Let the function $f(z)$  given by (\ref{ieq1}) be in the class $\mathcal{S}^*_\Sigma[\alpha]$. Then
\begin{equation*}
\hspace{-.2in} |a_{2}|\leq \frac{2\alpha }{\sqrt{1+\alpha}}
\end{equation*}
and
\begin{equation*}
 |a_{3}|\leq \alpha(4\alpha+1).
\end{equation*}
\end{co}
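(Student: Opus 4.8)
The plan is to derive this corollary directly from Theorem~\ref{thm21} by specializing the parameters to $\lambda=1$, $\mu=0$, $\delta=0$. First I would invoke the Remark following Definition~\ref{def21}: when $\lambda=1$, $\mu=0$, $\delta=0$ we have $(1-\lambda)\left(f(z)/z\right)^{\mu}=0$, $\lambda f'(z)\left(f(z)/z\right)^{\mu-1}=zf'(z)/f(z)$, and $\xi\delta\,zf''(z)=0$, so that the defining conditions (\ref{ieq21})--(\ref{ieq22}) of $\mathscr{B}_{\Sigma}^{\mu}(\alpha,\lambda,\delta)$ reduce to
\[
\left|\arg\left(\frac{zf'(z)}{f(z)}\right)\right|<\frac{\alpha\pi}{2}
\quad\text{and}\quad
\left|\arg\left(\frac{wg'(w)}{g(w)}\right)\right|<\frac{\alpha\pi}{2}.
\]
In other words $\mathscr{B}_{\Sigma}^{0}(\alpha,1,0)=\mathcal{S}^{*}_{\Sigma}[\alpha]$, so the hypothesis $f\in\mathcal{S}^{*}_{\Sigma}[\alpha]$ is precisely the hypothesis of Theorem~\ref{thm21} for this choice of parameters, and the two estimates (\ref{theq21}) and (\ref{theq222}) apply verbatim.

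Next I would substitute $\lambda=1$, $\mu=0$, $\delta=0$ (hence $\xi\delta=0$) into those estimates and simplify. For $|a_{2}|$, the quantity under the square root in (\ref{theq21}) becomes $(\lambda+\mu+2\xi\delta)^{2}+\alpha\!\left[2\lambda+\mu-(\lambda+2\xi\delta)^{2}+(12-4\mu)\xi\delta\right]=1+\alpha(2-1)=1+\alpha$, which yields $|a_{2}|\le 2\alpha/\sqrt{1+\alpha}$. For $|a_{3}|$, the right-hand side of (\ref{theq222}) becomes $4\alpha^{2}/(\lambda+\mu+2\xi\delta)^{2}+2\alpha/(2\lambda+\mu+6\xi\delta)=4\alpha^{2}+2\alpha/2=4\alpha^{2}+\alpha=\alpha(4\alpha+1)$, which is the claimed bound.

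Since the argument is a pure specialization of an already proved theorem, there is no genuine difficulty; the only points requiring minor care are the arithmetic simplification of the radicand in (\ref{theq21}) and the use of the Remark to identify $\mathscr{B}_{\Sigma}^{0}(\alpha,1,0)$ with the strongly bi-starlike class $\mathcal{S}^{*}_{\Sigma}[\alpha]$. No computation beyond Theorem~\ref{thm21} is needed.
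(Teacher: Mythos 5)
Your proposal is correct and is exactly the paper's route: the paper obtains this corollary by setting $\lambda=1$, $\mu=0$, $\delta=0$ in Theorem~\ref{thm21}, using the identification $\mathscr{B}_{\Sigma}^{0}(\alpha,1,0)=\mathcal{S}^{*}_{\Sigma}[\alpha]$ noted in the Remark. Your simplifications of the radicand to $1+\alpha$ and of the $|a_{3}|$ bound to $\alpha(4\alpha+1)$ are both accurate, so nothing is missing.
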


\section{Coefficient bounds for the function class $\mathscr{B}_{\Sigma }^{\mu }(\beta,\lambda,\delta)$}
This section is concerned with the coefficient bounds for the Taylor-Maclaurin coefficients $|a_2|$ and $|a_3|$ of the function $f \in \mathscr{B}_{\Sigma }^{\mu }(\beta,\lambda,\delta)$. Various known spacial cases of the main result are pointed out.

\begin{de} \label{def22}
For $\lambda \geq 1,\mu \geq 0, \delta \geq 0$ and $0 \leq \beta < 1$, a function $f\in \Sigma $ given by (\ref{ieq1}) is said to be in the class $\mathscr{B}_{\Sigma }^{\mu }(\beta, \lambda ,\delta)$ if the following conditions hold for all $z,w\in \mathbb{U}$:
\begin{equation} \label{ieq23}
\mbox{Re}\left((1-\lambda )\left(\frac{f(z)}{z}\right)^{\mu}+\lambda f^{\prime }(z)\left(\frac{f(z)}{z}\right)^{\mu -1}+\xi\delta zf^{\prime \prime }(z)\right)>\beta
\end{equation}
and
\begin{equation} \label{ieq24}
\mbox{Re} \left((1-\lambda)\left(\frac{g(w)}{w}\right)^{\mu }+\lambda g^{\prime }(w)\left(\frac{g(w)}{w}\right)^{\mu -1}+\xi\delta wg^{\prime \prime }(w)\right)>\beta,
\end{equation}
where the function $g(w)=f^{-1}(w)$ is defined by (\ref{ieq2}) and $\xi=\frac{2\lambda +\mu }{2\lambda +1}$.
\end{de}


\begin{re}
Note that for $\lambda=1, \mu=1$ and $\delta =0$, the class of functions $\mathscr{B}_{\Sigma }^{1}(\beta,1,0):=\mathscr{B}_{\Sigma }(\beta)$ have been introduced and studied by Srivastava et al. \cite{C27}, for $\mu=1$ and $\delta =0$, the class of functions $\mathscr{B}_{\Sigma }^{1}(\beta,\lambda,0):=\mathscr{B}_{\Sigma }(\beta,\lambda)$ have been introduced and studied by Frasin and Aouf \cite{C28}, for $\delta =0$, the class of functions $\mathscr{B}_{\Sigma }^{\mu}(\beta,\lambda,0):=\mathscr{B}_{\Sigma }^{\mu}(\beta,\lambda)$ have been introduced and studied by \c{C}a\u{g}lar et al. \cite{C29}, and for $\lambda=1, \mu=0$ and $\delta =0$, we obtain the well-known class $\mathscr{B}_{\Sigma }^{0}(\beta,1,0):=\mathcal{S}^*_\Sigma(\beta)$ of bi-starlike functions of order $\beta$.
\end{re}

\begin{thm}
\label{thm221} Let the function $f(z)$  given by (\ref{ieq1}) be in the class $\mathscr{B}_{\Sigma }^{\mu }(\beta,\lambda,\delta)$. Then
\begin{equation} \label{theq221}
\hspace{-1.9in} |a_{2}|\leq \min \left\{ \sqrt{\frac{4(1-\beta )}{(2\lambda +\mu )(1+\mu +\frac{12\delta }{2\lambda +1})}},\frac{2(1-\beta )}{%
\lambda +\mu +2\xi \delta}\right\}
\end{equation}
and
\begin{equation} \label{theq2222}
|a_{3}|\leq \left\{
\begin{array}{c}
\min \left\{ \frac{(1-\beta )\left(4+\frac{24\delta}{2\lambda+1}\right)}{(2\lambda +\mu+6\xi\delta)(1+\mu +\frac{12\delta }{2\lambda +1})},\frac{4(1-\beta )^{2}}{(\lambda +\mu +2\xi \delta )^{2}}+%
\frac{2(1-\beta)}{2\lambda +\mu+6\xi\delta}\right\}
;\ \  0\leq \mu <1 \\
\frac{2(1-\beta )}{2\lambda +\mu+6\xi\delta};\hspace{2.65in} \ \quad \mu
\geq 1%
\end{array}%
\right.
.
\end{equation}

\end{thm}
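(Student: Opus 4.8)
The plan is to rerun the argument of Theorem~\ref{thm21} with the exponential device $[p(z)]^{\alpha}$ replaced by the affine one $\beta+(1-\beta)p(z)$. Since the left-hand sides of \eqref{ieq23} and \eqref{ieq24} are analytic in $\mathbb{U}$, equal $1$ at the origin (because $(1-\lambda)+\lambda=1$), and have real part exceeding $\beta$, there exist $p,q\in\mathcal{P}$, say $p(z)=1+p_{1}z+p_{2}z^{2}+\cdots$ and $q(w)=1+q_{1}w+q_{2}w^{2}+\cdots$, with
\[
(1-\lambda)\Bigl(\tfrac{f(z)}{z}\Bigr)^{\mu}+\lambda f^{\prime}(z)\Bigl(\tfrac{f(z)}{z}\Bigr)^{\mu-1}+\xi\delta zf^{\prime\prime}(z)=\beta+(1-\beta)p(z),
\]
and the analogous identity for $g(w)=f^{-1}(w)$ with $q$ in place of $p$. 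First I would expand both sides in powers of $z$ (resp.\ $w$); the coefficient algebra is identical to the one that produced \eqref{eq23}--\eqref{eq26}, the only difference being that the four right-hand sides are now simply $(1-\beta)p_{1}$, $(1-\beta)p_{2}$, $(1-\beta)q_{1}$, $(1-\beta)q_{2}$, since the new device is linear in $p$ and hence contributes no $p_{1}^{2}$ or $q_{1}^{2}$ terms.

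From the two first-order relations I obtain $p_{1}=-q_{1}$ and $2(\lambda+\mu+2\xi\delta)^{2}a_{2}^{2}=(1-\beta)^{2}(p_{1}^{2}+q_{1}^{2})$, while adding the two second-order relations gives $(2\lambda+\mu)\bigl(1+\mu+\tfrac{12\delta}{2\lambda+1}\bigr)a_{2}^{2}=(1-\beta)(p_{2}+q_{2})$. Applying Lemma~\ref{lem} ($|p_{k}|,|q_{k}|\le2$) to each of these two expressions for $a_{2}^{2}$ yields the candidate bounds $|a_{2}|^{2}\le 4(1-\beta)/[(2\lambda+\mu)(1+\mu+\tfrac{12\delta}{2\lambda+1})]$ and $|a_{2}|\le 2(1-\beta)/(\lambda+\mu+2\xi\delta)$, and retaining the smaller one gives \eqref{theq221}.

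For $|a_{3}|$ I would subtract the two second-order relations; using $p_{1}^{2}=q_{1}^{2}$ together with $(2\lambda+\mu)\bigl(1+\tfrac{6\delta}{2\lambda+1}\bigr)=2\lambda+\mu+6\xi\delta$ this collapses to $a_{3}=a_{2}^{2}+\dfrac{(1-\beta)(p_{2}-q_{2})}{2(2\lambda+\mu+6\xi\delta)}$. Set $A=(2\lambda+\mu)\bigl(1+\mu+\tfrac{12\delta}{2\lambda+1}\bigr)$ and $B=2\lambda+\mu+6\xi\delta$. Substituting $a_{2}^{2}=(1-\beta)(p_{2}+q_{2})/A$ from the previous step writes $a_{3}$ as the linear combination $(1-\beta)\bigl[(\tfrac1A+\tfrac1{2B})p_{2}+(\tfrac1A-\tfrac1{2B})q_{2}\bigr]$; bounding $|p_{2}|,|q_{2}|$ by $2$ and using $(\tfrac1A+\tfrac1{2B})+\bigl|\tfrac1A-\tfrac1{2B}\bigr|=2\max\{\tfrac1A,\tfrac1{2B}\}$ gives $|a_{3}|\le 4(1-\beta)\max\{1/A,\,1/(2B)\}$. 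Because $2B-A=(2\lambda+\mu)(1-\mu)$, the maximum is $1/(2B)$ when $\mu\ge1$, which yields $|a_{3}|\le 2(1-\beta)/(2\lambda+\mu+6\xi\delta)$, and it is $1/A$ when $0\le\mu<1$, which yields $|a_{3}|\le 4(1-\beta)/A$ --- exactly the first term inside the brace of \eqref{theq2222} after rewriting $4/A=\bigl(4+\tfrac{24\delta}{2\lambda+1}\bigr)/\bigl[(2\lambda+\mu+6\xi\delta)(1+\mu+\tfrac{12\delta}{2\lambda+1})\bigr]$. The second term comes from the cruder estimate $|a_{3}|\le|a_{2}|^{2}+2(1-\beta)/(2\lambda+\mu+6\xi\delta)$ combined with $|a_{2}|^{2}\le 4(1-\beta)^{2}/(\lambda+\mu+2\xi\delta)^{2}$; taking the minimum of the two estimates in the range $0\le\mu<1$ completes \eqref{theq2222}.

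The calculations here are routine; the points that need genuine care are (i) the legitimacy of the representation $\beta+(1-\beta)p$, which relies on the constant terms of \eqref{ieq23} and \eqref{ieq24} being exactly $1$; (ii) the identity $2B-A=(2\lambda+\mu)(1-\mu)$, which is what forces the case split at $\mu=1$; and (iii) observing that for $\mu\ge1$ the ``linear-combination'' estimate already dominates the $|a_{2}|^{2}+\cdots$ one, so that no minimum is recorded there.
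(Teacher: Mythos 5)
Your proposal is correct and follows essentially the same route as the paper: the affine representation $\beta+(1-\beta)p$, the same four coefficient relations, the two competing expressions for $a_{2}^{2}$, and for $a_{3}$ both the $a_{2}^{2}+\frac{(1-\beta)(p_{2}-q_{2})}{2(2\lambda+\mu+6\xi\delta)}$ estimate and the substitution of the $(p_{2}+q_{2})$-expression for $a_{2}^{2}$, your $\max\{1/A,1/(2B)\}$ formulation being just a compact rewriting of the paper's $|1-\mu|$ case analysis. (The only stray remark is your appeal to $p_{1}^{2}=q_{1}^{2}$ in the subtraction step, which is not needed since the right-hand sides are linear in $p,q$.)
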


\begin{proof}
Let $f\in \mathscr{B}_{\Sigma }^{\mu }(\beta,\lambda,\delta)$. It follows from (\ref{ieq23}) and (\ref{ieq24}) that there exist $p,q\in \mathcal{P}$ such that
\begin{equation} \label{eq221}
(1-\lambda )\left(\frac{f(z)}{z}\right)^{\mu }+\lambda f^{\prime }(z)\left(\frac{f(z)}{z}\right)^{\mu -1}+\xi\delta zf^{\prime \prime }(z)=\beta +(1-\beta )p(z)
\end{equation}
and
\begin{equation} \label{eq222}
(1-\lambda )\left(\frac{g(w)}{w}\right)^{\mu }+\lambda g^{\prime}(w)\left(\frac{g(w)}{w}\right)^{\mu -1}+\xi\delta
wg^{\prime \prime }(w)=\beta +(1-\beta )q(w),
\end{equation}
where $p(z)= 1+p_{1}z+p_{2}z^{2}+\cdots$ and $q(w)=1+q_{1}w+q_{2}w^{2}+\cdots$ in $\mathcal{P}$.

Now, equating the coefficients in (\ref{eq221}) and (\ref{eq222}), we get
\begin{equation} \label{eq223}
\hspace{0.8in}\left(\lambda+\mu + 2\xi\delta \right)a_{2}=(1-\beta )p_{1},
\end{equation}
\begin{equation} \label{eq224}
\hspace{-0.6in} (2\lambda+\mu)\left[\left(\frac{\mu -1}{2}\right)a_{2}^{2}+\left(1+\frac{6\delta }{2\lambda +1}\right)a_{3}\right]=(1-\beta )p_{2},
\end{equation}
and
\begin{equation} \label{eq225}
\hspace{0.65in} -\left(\lambda+\mu +2\xi\delta \right)a_{2}=(1-\beta )q_{1},
\end{equation}
\begin{equation} \label{eq226}
\hspace{-1.23in} (2\lambda+\mu)\left[\left(\frac{\mu +3}{2}+\frac{12\delta }{2\lambda +1}\right)a_{2}^{2}-\left(1+\frac{6\delta }{2\lambda +1}\right)a_{3}\right]=(1-\beta )q_{2}.
\end{equation}

From (\ref{eq223}) and (\ref{eq225}), we obtain
\begin{equation} \label{eq227}
p_{1}=-q_{1},
\end{equation}
and
\begin{equation} \label{eq228}
\hspace{-.2in} 2\left(\lambda +\mu +2\xi\delta\right)^{2} a_{2}^2=(1-\beta )^{2}(p_{1}^{2}+q_{1}^{2}).
\end{equation}

By adding (\ref{eq224}) to (\ref{eq226}), we get
\begin{equation} \label{eq229}
(2\lambda +\mu)\left(1+\mu +\frac{12\delta }{2\lambda +1}\right) a_{2}^{2}=(1-\beta )(p_{2}+q_{2}).
\end{equation}

From equations (\ref{eq228}) and (\ref{eq229}), we get
\begin{equation} \label{eq5555}
\hspace{-.45in} |a_{2}|^{2}\leq\frac{(1-\beta )^{2}}{2(\lambda+\mu+2\xi\delta)^{2}}\left(|p_{1}|^{2}+|q_{1}|^{2}\right)
\end{equation}
and
\begin{equation} \label{eq55555}
|a_{2}|^{2}\leq \frac{(1-\beta )}{(2\lambda+\mu)(1+\mu+\frac{12\delta}{2\lambda +1})}\left(|p_{2}|+|q_{2}|\right),
\end{equation}
respectively.

By considering Lemma \ref{lem} we get from (\ref{eq5555}) and (\ref{eq55555}) the desired inequality (\ref{theq221}).

Next, by subtracting (\ref{eq226}) from (\ref{eq224}), we have
\begin{equation} \label{eq2211}
2(2\lambda +\mu)\left(1+\frac{6\delta }{2\lambda +1}\right)a_{3}-2(2\lambda +\mu)\left(1+\frac{6\delta }{2\lambda +1}\right)a_{2}^{2}=(1-\beta )(p_{2}-q_{2}).
\end{equation}

Further, in view of (\ref{eq228}), it follows from (\ref{eq2211}) that
\begin{equation} \label{eq2212}
a_{3}=\frac{(1-\beta )^{2}}{2\left(\lambda +\mu +2\xi\delta\right)^{2}}(p_{1}^{2}+q_{1}^{2})+\frac{(1-\beta)}{2(2\lambda +\mu+6\xi\delta)}\left(p_{2}-q_{2}\right).
\end{equation}

Applying Lemma \ref{lem} for (\ref{eq2212}), we get
\begin{equation} \label{eq6666}
|a_{3}|\leq\frac{4(1-\beta )^{2}}{\left(\lambda +\mu +2\xi\delta\right)^{2}}+\frac{2(1-\beta)}{2\lambda +\mu+6\xi\delta}.
\end{equation}

On other hand, by using the equation (\ref{eq229}) in (\ref{eq2211}), we obtain
\begin{equation} \label{eq22212}
a_{3}=\frac{1-\beta}{2(2\lambda +\mu+6\xi\delta)}\left[\left(\frac{3 +\mu+\frac{24\delta}{2\lambda+1}}{1+\mu+\frac{12\delta}{2\lambda+1}}\right)p_{2}+ \left(\frac{1-\mu}{1+\mu+\frac{12\delta}{2\lambda+1}}\right)q_{2}\right].
\end{equation}

Applying Lemma \ref{lem} for (\ref{eq22212}), we get
\begin{equation} \label{eq66666}
|a_{3}|\leq\frac{1-\beta}{2\lambda +\mu+6\xi\delta}\left(\frac{3 +\mu+\frac{24\delta}{2\lambda+1}+|1-\mu|}{1+\mu+\frac{12\delta}{2\lambda+1}}\right).
\end{equation}

Hence, by considering equation (\ref{eq6666}) we obtain from equation (\ref{eq66666}) for $0\leq \mu <1$ the first part of the desired inequality (\ref{theq2222}), and for $\mu \geq 1$ the second part of the desired inequality (\ref{theq2222}).

This completes the proof of Theorem \ref{thm221}.
\end{proof}

If we take $\lambda =1$, $\mu =1$ and $\delta =0$ in Theorem \ref{thm221}, we get the following consequence.

\begin{co} \cite{C29}
Let the function $f(z)$ given by (\ref{ieq1}) be in the class $\mathscr{B}_{\Sigma }(\beta)$. Then
\begin{equation*}
|a_{2}|\leq \left\{
\begin{array}{c}
\sqrt{\frac{2(1-\beta )}{3}};\ \ \  0\leq \beta <\frac{1}{3} \\
1-\beta; \qquad \frac{1}{3}\leq \beta <1
\end{array}%
\right.
\end{equation*}
and
\begin{equation*}
\hspace{-1in}|a_{3}|\leq \frac{2(1-\beta)}{3}.
\end{equation*}
\end{co}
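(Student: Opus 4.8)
The plan is to derive this corollary directly from Theorem~\ref{thm221} by specializing the parameters to $\lambda=1$, $\mu=1$, $\delta=0$; by the Remark preceding the theorem this is precisely the choice for which $\mathscr{B}_{\Sigma}^{1}(\beta,1,0)=\mathscr{B}_{\Sigma}(\beta)$. The first task is to evaluate the compound constants appearing in \eqref{theq221} and \eqref{theq2222} at these values. Since $\xi=\frac{2\lambda+\mu}{2\lambda+1}=\frac{3}{3}=1$ and $\delta=0$, every term carrying a factor of $\delta$ drops out, and one is left with $\lambda+\mu+2\xi\delta=2$, $2\lambda+\mu=3$, $1+\mu+\frac{12\delta}{2\lambda+1}=2$, and $2\lambda+\mu+6\xi\delta=3$.

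Substituting these into the bound \eqref{theq221} for $|a_{2}|$ gives $|a_{2}|\le\min\!\left\{\sqrt{\tfrac{4(1-\beta)}{3\cdot 2}},\,\tfrac{2(1-\beta)}{2}\right\}=\min\!\left\{\sqrt{\tfrac{2(1-\beta)}{3}},\,1-\beta\right\}$. The only step that needs a remark is to determine which entry of this pair is the smaller as $\beta$ runs over $[0,1)$: after squaring, the inequality $\tfrac{2(1-\beta)}{3}\le(1-\beta)^{2}$ is equivalent, upon dividing by the positive number $1-\beta$, to $\beta\le\tfrac13$. Hence $\sqrt{\tfrac{2(1-\beta)}{3}}$ is the minimum on $[0,\tfrac13)$ and $1-\beta$ is the minimum on $[\tfrac13,1)$, which is exactly the asserted piecewise estimate for $|a_{2}|$.

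For $|a_{3}|$ the key observation is that the specialized value $\mu=1$ falls in the regime $\mu\ge1$ of \eqref{theq2222}; in that regime Theorem~\ref{thm221} already gives the single clean bound $|a_{3}|\le\frac{2(1-\beta)}{2\lambda+\mu+6\xi\delta}$, which with the constants above equals $\frac{2(1-\beta)}{3}$, so no comparison of competing bounds and no further case analysis is required. I do not expect any genuine obstacle here: the whole proof is a substitution followed by the elementary comparison of $\sqrt{2(1-\beta)/3}$ with $1-\beta$, and the one point to be attentive to is verifying that $\mu=1$ places us on the $\mu\ge1$ branch of \eqref{theq2222} rather than the $0\le\mu<1$ branch, so that the simpler $|a_{3}|$ estimate is the one that applies.
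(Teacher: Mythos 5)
Your proposal is correct and follows exactly the paper's route: the corollary is obtained by specializing Theorem \ref{thm221} to $\lambda=1$, $\mu=1$, $\delta=0$, and your evaluation of the constants, the resolution of the minimum in \eqref{theq221} into the piecewise bound at $\beta=\tfrac13$, and the observation that $\mu=1$ lies on the $\mu\geq 1$ branch of \eqref{theq2222} are all accurate. The paper states this specialization without writing out these details, so your write-up simply makes the same argument explicit.
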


If we choose $\mu =1$ and $\delta =0$ in Theorem \ref{thm221}, we get the following consequence.

\begin{co} \cite{C29}
Let the function $f(z)$  given by (\ref{ieq1}) be in the class $\mathscr{B}_{\Sigma }(\beta,\lambda)$. Then
\begin{equation*}
 |a_{2}|\leq \min \left\{ \sqrt{\frac{2(1-\beta )}{2\lambda +1}},\frac{2(1-\beta )}{%
\lambda +1}\right\}
\end{equation*}
and
\begin{equation*}
\hspace{-1.3in}|a_{3}|\leq \frac{2(1-\beta )}{2\lambda +1}.
\end{equation*}
\end{co}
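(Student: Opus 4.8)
The plan is to obtain this corollary as a direct specialization of Theorem \ref{thm221}, taking $\mu = 1$ and $\delta = 0$; by the Remark following Definition \ref{def22}, the class $\mathscr{B}_{\Sigma}(\beta,\lambda)$ is precisely $\mathscr{B}_{\Sigma}^{1}(\beta,\lambda,0)$, so Theorem \ref{thm221} applies verbatim to every $f$ in it.

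First I would evaluate the auxiliary parameter $\xi = \frac{2\lambda+\mu}{2\lambda+1}$ at $\mu = 1$, which gives $\xi = 1$. With $\delta = 0$ the product $\xi\delta$ vanishes throughout, so $\lambda + \mu + 2\xi\delta$ collapses to $\lambda + 1$, the quantity $2\lambda + \mu + 6\xi\delta$ collapses to $2\lambda + 1$, and the factor $1 + \mu + \frac{12\delta}{2\lambda+1}$ becomes simply $2$.

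For the $|a_{2}|$ estimate I substitute these values into (\ref{theq221}). The first entry of the minimum becomes $\sqrt{\frac{4(1-\beta)}{(2\lambda+1)\cdot 2}} = \sqrt{\frac{2(1-\beta)}{2\lambda+1}}$ after cancelling the factor $2$, while the second entry becomes $\frac{2(1-\beta)}{\lambda+1}$, which is exactly the claimed bound. For $|a_{3}|$ the only point requiring attention is the case split in (\ref{theq2222}): since $\mu = 1$ satisfies $\mu \geq 1$, we land automatically in the second branch, where $|a_{3}| \leq \frac{2(1-\beta)}{2\lambda+\mu+6\xi\delta}$; with $\mu = 1$ and $\delta = 0$ this reduces to $\frac{2(1-\beta)}{2\lambda+1}$.

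There is essentially no obstacle here: the argument is pure substitution and arithmetic simplification. The one thing that warrants care is noticing that $\mu = 1$ falls into the $\mu \geq 1$ case of (\ref{theq2222}), so the minimum appearing in the $0 \leq \mu < 1$ branch plays no role and the $|a_{3}|$ bound is the single clean expression. Alternatively one could re-run the proof of Theorem \ref{thm221} from the specialized coefficient relations: with $\mu = 1$ the quadratic term $\left(\frac{\mu-1}{2}\right)a_{2}^{2}$ in (\ref{eq224}) disappears, which is precisely what produces the simple form of the $a_{3}$ estimate — but quoting Theorem \ref{thm221} directly is the most economical route.
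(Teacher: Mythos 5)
Your proposal is correct and matches the paper exactly: the paper obtains this corollary precisely by setting $\mu=1$ and $\delta=0$ in Theorem \ref{thm221}, so that $\xi\delta=0$ and the bounds in (\ref{theq221}) and the $\mu\geq 1$ branch of (\ref{theq2222}) reduce to the stated estimates. Your arithmetic, including the observation that $\mu=1$ falls in the $\mu\geq1$ case, is exactly right.
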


If we choose $\delta =0$ in Theorem \ref{thm221}, we get the following consequence.

\begin{co} \cite{C29}
Let the function $f(z)$  given by (\ref{ieq1}) be in the class $\mathscr{B}_{\Sigma }^{\mu }(\beta,\lambda)$. Then
\begin{equation*}
\hspace{-1.1in} |a_{2}|\leq \min \left\{ \sqrt{\frac{4(1-\beta )}{(2\lambda +\mu )(\mu+1)}},\frac{2(1-\beta )}{%
(\lambda +\mu)}\right\}
\end{equation*}
and
\begin{equation*}
\hspace{-0.3in} |a_{3}|\leq \left\{
\begin{array}{c}
\min \left\{ \frac{4(1-\beta )}{(2\lambda +\mu)(\mu+1)},\frac{4(1-\beta )^{2}}{(\lambda +\mu)^{2}}+%
\frac{2(1-\beta)}{2\lambda +\mu}\right\}
;\ \  0\leq \mu <1 \\
\frac{2(1-\beta )}{2\lambda +\mu};\hspace{1.85in} \ \quad \mu
\geq 1%
\end{array}%
\right.
.
\end{equation*}
\end{co}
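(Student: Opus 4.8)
The plan is to obtain this corollary directly from Theorem \ref{thm221} by specializing $\delta = 0$; no fresh coefficient computation is required. The first point is that, by definition, the class $\mathscr{B}_{\Sigma}^{\mu}(\beta,\lambda)$ is exactly $\mathscr{B}_{\Sigma}^{\mu}(\beta,\lambda,0)$, so any $f$ belonging to it satisfies the hypotheses of Theorem \ref{thm221} with the parameter $\delta$ set to zero. Because the auxiliary constant $\xi = \frac{2\lambda+\mu}{2\lambda+1}$ is finite, each product of the form $\xi\delta$ vanishes once $\delta = 0$, so I would never need the explicit value of $\xi$ at all.

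First I would tabulate the effect of $\delta = 0$ on the compound expressions appearing in the bounds (\ref{theq221}) and (\ref{theq2222}). The fraction $\frac{12\delta}{2\lambda+1}$ collapses to $0$, turning $1+\mu+\frac{12\delta}{2\lambda+1}$ into $1+\mu$; similarly $\frac{24\delta}{2\lambda+1}\to 0$ turns $4+\frac{24\delta}{2\lambda+1}$ into $4$; and every occurrence of $2\xi\delta$ or $6\xi\delta$ drops, so $\lambda+\mu+2\xi\delta\to\lambda+\mu$ and $2\lambda+\mu+6\xi\delta\to 2\lambda+\mu$.

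Substituting these reductions into Theorem \ref{thm221} yields the corollary term by term. The bound (\ref{theq221}) becomes $|a_2|\le\min\{\sqrt{4(1-\beta)/[(2\lambda+\mu)(1+\mu)]},\,2(1-\beta)/(\lambda+\mu)\}$, as claimed. For (\ref{theq2222}), the case split on $\mu$ carries over verbatim: in the range $0\le\mu<1$ the first entry of the minimum reduces to $4(1-\beta)/[(2\lambda+\mu)(1+\mu)]$ and the second to $4(1-\beta)^2/(\lambda+\mu)^2+2(1-\beta)/(2\lambda+\mu)$, while for $\mu\ge 1$ the single bound is $2(1-\beta)/(2\lambda+\mu)$. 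There is no genuine analytical obstacle here; the only thing worth verifying is that the threshold $\mu = 1$ governing the branches is untouched by setting $\delta = 0$. This is indeed the case, since that threshold originates solely from the sign of $1-\mu$ in the intermediate estimate (\ref{eq66666}) and does not depend on $\delta$.
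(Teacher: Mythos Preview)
Your proposal is correct and matches the paper's own approach exactly: the corollary is obtained simply by setting $\delta=0$ in Theorem~\ref{thm221}, and you have carefully verified that every compound expression reduces to the stated form. There is nothing to add.
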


If we choose $\lambda =1, \mu=0$ and $\delta =0$ in Theorem \ref{thm221}, we get the following consequence.

\begin{co} \cite{C29}
Let the function $f(z)$  given by (\ref{ieq1}) be in the class $\mathcal{S}^*_\Sigma(\beta)$. Then
\begin{equation*}
\hspace{-1.55in} |a_{2}|\leq \sqrt{2(1-\beta)}
\end{equation*}
and
\begin{equation*}
\hspace{-0.3in} |a_{3}|\leq \left\{
\begin{array}{c}
2(1-\beta );\qquad \ \ \ \ 0\leq \beta <\frac{3}{4} \\
(1-\beta)(5-4\beta); \  \frac{3}{4}\leq \beta <1
\end{array}%
\right.
.
\end{equation*}
\end{co}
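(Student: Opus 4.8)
The plan is to obtain this corollary as a direct specialization of Theorem \ref{thm221}, since the remark preceding that theorem identifies the class $\mathcal{S}^*_\Sigma(\beta)$ with $\mathscr{B}_\Sigma^0(\beta,1,0)$, i.e. the case $\lambda=1$, $\mu=0$, $\delta=0$. The first step is to evaluate the auxiliary quantity $\xi=\frac{2\lambda+\mu}{2\lambda+1}=\frac{2}{3}$, and to observe that because $\delta=0$ every term carrying the factor $\xi\delta$ (and more generally every $\delta$-dependent term) drops out. This collapses the compound coefficients appearing in (\ref{theq221}) and (\ref{theq2222}) to simple numbers: $\lambda+\mu+2\xi\delta=1$, $2\lambda+\mu+6\xi\delta=2$, and $1+\mu+\frac{12\delta}{2\lambda+1}=1$.

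For the bound on $|a_2|$, I would substitute these values into the two entries of the minimum in (\ref{theq221}). The first entry becomes $\sqrt{\frac{4(1-\beta)}{2\cdot 1}}=\sqrt{2(1-\beta)}$ and the second becomes $\frac{2(1-\beta)}{1}=2(1-\beta)$. Since $|a_2|$ is bounded by the minimum of the two, it is in particular bounded by the first entry $\sqrt{2(1-\beta)}$, which is the asserted estimate.

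For the bound on $|a_3|$, I note that $\mu=0<1$, so the first branch of (\ref{theq2222}) applies and $|a_3|$ is bounded by the minimum of two quantities. Substituting the collapsed coefficients, the first quantity $\frac{(1-\beta)(4+\frac{24\delta}{2\lambda+1})}{(2\lambda+\mu+6\xi\delta)(1+\mu+\frac{12\delta}{2\lambda+1})}$ reduces to $\frac{4(1-\beta)}{2\cdot 1}=2(1-\beta)$, while the second quantity $\frac{4(1-\beta)^2}{(\lambda+\mu+2\xi\delta)^2}+\frac{2(1-\beta)}{2\lambda+\mu+6\xi\delta}$ reduces to $4(1-\beta)^2+(1-\beta)=(1-\beta)(5-4\beta)$. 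It then remains to evaluate $\min\{2(1-\beta),(1-\beta)(5-4\beta)\}$. Factoring out the positive quantity $1-\beta$, this amounts to comparing the constants $2$ and $5-4\beta$: one has $5-4\beta\ge 2$ exactly when $\beta\le\frac34$. Hence the minimum equals $2(1-\beta)$ on $0\le\beta<\frac34$ and $(1-\beta)(5-4\beta)$ on $\frac34\le\beta<1$, which is precisely the claimed piecewise bound.

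The computations are entirely routine once the parameters are fixed; the only point demanding care is the last one, namely correctly locating the crossover $\beta=\frac34$ where the two candidate $|a_3|$ bounds coincide and the active branch of the minimum switches. Everything else is algebraic simplification of the coefficients of Theorem \ref{thm221} under $\lambda=1,\mu=0,\delta=0$.
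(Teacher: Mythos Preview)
Your proof is correct and follows exactly the paper's approach: the corollary is obtained by specializing Theorem~\ref{thm221} to $\lambda=1$, $\mu=0$, $\delta=0$ and simplifying. Your explicit verification of the crossover at $\beta=\tfrac34$ for the $|a_3|$ bound and your observation that $|a_2|\le\min\{\sqrt{2(1-\beta)},2(1-\beta)\}\le\sqrt{2(1-\beta)}$ fill in details the paper leaves implicit.
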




\end{document}